\documentclass[11pt]{article}

\usepackage[T1]{fontenc}
\usepackage[a4paper,margin=1in]{geometry}
\usepackage{amsmath,amssymb,amsthm,mathtools,color}
\usepackage{microtype}
\usepackage[hidelinks]{hyperref}
\hypersetup{pdftitle={Bounded-VC chromatic thresholds of graphs},pdfauthor={Jinze Hu}}

\allowdisplaybreaks
\newtheorem{theorem}{Theorem}[section]
\newtheorem{lemma}[theorem]{Lemma}
\newtheorem{proposition}[theorem]{Proposition}

\theoremstyle{definition}

\theoremstyle{remark}

\newcommand{\VC}{\operatorname{VC}}
\newcommand{\N}{\mathcal{N}}
\newcommand{\M}{\mathcal{M}}
\newcommand{\K}{\mathcal{K}}
\newcommand{\A}{\mathcal{A}}
\newcommand{\B}{\mathcal{B}}
\newcommand{\F}{\mathcal{F}}
\newcommand{\eps}{\varepsilon}
\newcommand{\dvc}{\delta_{\chi}^{\mathrm{VC}}}
\newcommand{\vtx}{\operatorname{v}}

\title{The Bounded-VC chromatic thresholds of graphs\thanks{The work is supported by NSFC(11871015) and CGLSTDFFJ(2023L3003). E-mail: hujinze@fzu.edu.cn (J. Hu), qliu@fzu.edu.cn (Q. Liu), 1519036064@qq.com (L. Zhang), yhong@fzu.edu.cn (Y. Hong) .}}

\author{Jinze Hu$^a$, Qinghai Liu$^{a,c}$\footnote{Corresponding author.}, Liping Zhang$^{a}$, Yanmei Hong$^{b,c}$\\[1mm]
	{\small \it
		$^a$Center for Discrete Mathematics, Fuzhou University, Fuzhou, Fujian, 350108 China}\\
	{\small \it
		$^b$School of Mathematics and Statistics, Fuzhou University, Fuzhou, Fujian, 350108 China}\\
	{\small \it
		$^c$Key Laboratory for Operations Research and Cybernetics of Fujian Universities}
}

\date{}

\begin{document}
	\maketitle
	
	\begin{abstract}
		For a graph $H$, the chromatic threshold $\delta_\chi(H)$ is the infimum of $c>0$ such that the chromatic number of every $n$-vertex $H$-free graph with minimum degree at least $cn$ is bounded by a constant depending only on $H$ and $c$. Allen, B\"ottcher, Griffiths, Kohayakawa, and Morris proved that if $\chi(H)=r\geq 3$, then
		$\delta_{\chi}(H)\in\{\frac{r-3}{r-2}, \frac{2r-5}{2r-3}, \frac{r-2}{r-1}\}$.
		Liu, Shangguan, Skokan, and Xu introduced the bounded-VC chromatic threshold $\dvc(H)$ by restricting the host graphs to have bounded VC-dimension. We determine this parameter for graph $H$ with $\chi(H)\ge 3$. More precisely, let $\M(H)$ be the decomposition family of an $r$-chromatic graph $H$, then
		\[
		\dvc(H)=
		\begin{cases}
			\dfrac{r-3}{r-2},&\text{if $\M(H)$ contains a forest},\\[4pt]
			\dfrac{r-2}{r-1},&\text{otherwise}.
		\end{cases}
		\]
	\end{abstract}
	
	\medskip
	\noindent\textbf{Keywords.} Chromatic threshold; VC-dimension; forbidden subgraph.																																																	
	
	
	\section{Introduction}

	The {\it chromatic number} of a graph $G$, a well-studied concept in graph theory, means the minimum number of colors as an assignment to the vertices of $G$ such that no two adjacent vertices are assigned the same color. Usually, we  denote the chromatic number of $G$ by  $\chi(G)$ and we call $G$ a $\chi(G)$-chromatic graph. A classical problem related to the chromatic number is to explore the structure of graphs with a forbidden subgraph. The classical constructions of Zykov~\cite{Zykov} and Tutte~\cite{Tutte} show that forbidding a triangle does not bound the chromatic number. This result was later strengthened by Erd\H{o}s~\cite{ErdosProbability}, who constructed graphs with arbitrarily large girth and chromatic number. This implies that there exists a graph with a forbidden non-acyclic graph admitting unbounded chromatic number.
	
	Erd\H{o}s and Simonovits~\cite{ErdosSimonovits} asked whether a linear minimum-degree condition bounds the chromatic number. This question led to the chromatic threshold of a graph $H$, defined by
	\[
	\begin{aligned}
		\delta_\chi(H):=\inf\bigl\{c\geq 0:&\text{ there exists }C=C(H,c)\text{ such that every $n$-vertex $H$-free graph $G$}\\& \text{ with }  \delta(G)\geq cn\text{ satisfies }\chi(G)\leq C\bigr\}.
	\end{aligned}
	\]
	For triangles, Erd\H{o}s and Simonovits conjectured that $\delta_\chi(K_3)=1/3$, which was proved by Thomassen~\cite{ThomassenTriangle}. Goddard and Lyle~\cite{GoddardLyle}, and independently Nikiforov~\cite{Nikiforov}, determined the threshold for every complete graph. The general problem was resolved by Allen, B\"ottcher, Griffiths, Kohayakawa, and Morris~\cite{AllenEtAl}: if $\chi(H)=r\geq 3$, then
	\begin{equation}\label{eq:ordinary-values}
		\delta_\chi(H)\in
		\left\{
		\frac{r-3}{r-2},
		\frac{2r-5}{2r-3},
		\frac{r-2}{r-1}
		\right\}.
	\end{equation}
	Moreover, they characterized exactly which graphs attain each of the three values.
	
	The \textit{Vapnik--Chervonenkis dimension} is a measure of the complexity of a set system~\cite{Sauer,Shelah,VapnikChervonenkis}. For a graph $G$, it is natural to apply this notion to the family of vertex neighborhoods. \L{}uczak and Thomass\'e~\cite{LuczakThomasse} initiated the use of VC-dimension methods in chromatic-threshold problems. More recently, Liu, Shangguan, Skokan, and Xu~\cite{LiuEtAl} introduced the bounded-VC chromatic threshold
	\[
	\begin{aligned}
		\dvc(H):=\inf\bigl\{c\geq 0:&\ \forall d\in\mathbb N,\ \exists C=C(H,c,d)\text{ such that}
		\text{ every $n$-vertex $H$-free graph $G$}\\& \text{with }\VC(G)\leq d\text{ and }\delta(G)\geq cn
		\text{ satisfies }\chi(G)\leq C\bigr\}.
	\end{aligned}
	\]
	Clearly, $\dvc(H)\leq\delta_\chi(H)$. Kim, Liu, Shangguan, Wang, Wu, and Xue~\cite{KimEtAl} subsequently determined the bounded-VC threshold of a clique, proving that for $r\ge 3$, $\dvc(K_r)=\frac{r-3}{r-2}$.
	The result suggests that the bounded VC-dimension can weaken the minimum-degree condition required for bounding the chromatic number. The natural problem is therefore to determine which structural feature of $H$ governs this improvement.
	
	For an $r$-chromatic graph $H$, its \emph{decomposition family} $\M(H)$ is the family of bipartite graphs obtained from $H$ by deleting $r-2$ color classes in some proper $r$-coloring of $H$. 
	Our main result shows that, for the bounded-VC chromatic thresholds, this decomposition family is the only structural ingredient that remains.
	
	\begin{theorem}\label{thm:main}
		Let $H$ be a graph with $\chi(H)=r\geq 3$. Then
		\begin{equation}\label{eq:classification}
			\dvc(H)=
			\begin{cases}
				\dfrac{r-3}{r-2},&\text{if $\M(H)$ contains a forest},\\[6pt]
				\dfrac{r-2}{r-1},&\text{otherwise}.
			\end{cases}
		\end{equation}
	\end{theorem}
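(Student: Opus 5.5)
The proof splits into two lower bounds and two upper bounds.

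For the lower bound $\dvc(H)\ge\frac{r-3}{r-2}$, valid for every $H$, we use a blow-up.
\begin{itemize}
\item Fix $\eps>0$ and take a $K_3$-free graph $F$ of large girth and chromatic number, which has bounded VC-dimension (for instance $\VC(F)\le 2$ once its girth is at least $5$).
\item Take a balanced complete $(r-3)$-partite graph and add one further part containing a sparse copy of a graph of bounded VC-dimension with unbounded chromatic number. The minimum degree stays essentially $\frac{r-3}{r-2}n$, and a little care (padding with a complete bipartite piece) pushes it to exactly that value.
\item The resulting host graph has clique-like structure forcing $\chi\le r-1$ on its dense part, so it is $H$-free.
\item Joining graphs preserves bounded VC-dimension up to an additive constant.
\end{itemize}
Alternatively, we can simply quote the construction behind $\dvc(K_r)=\frac{r-3}{r-2}$ from~\cite{KimEtAl}: that graph is $K_r$-free, hence $H$-free since $K_r\subseteq$ anything... (this is the wrong direction). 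So the construction must be genuinely $H$-free: an $(r-3)$-partite join with a triangle-free, large-girth, high-chromatic graph $F$ is $(r-1)$-colourable apart from $F$ itself. Because $\chi(H)=r$, any copy of $H$ would have to use $F$ nontrivially, and the large girth of $F$ rules this out via a standard counting of colour classes.

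The upper bound $\frac{r-2}{r-1}$, valid for all $H$, is free: by the Erd\H{o}s--Stone theorem an $H$-free graph with $\delta\ge(\frac{r-2}{r-1}+\eps)n$ does not exist for large $n$, so $\dvc(H)\le\delta_\chi(H)\le\frac{r-2}{r-1}$.

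The lower bound when $\M(H)$ contains no forest uses a bounded-VC variant of the Allen et al.\ construction.
\begin{itemize}
\item Take a complete $(r-2)$-partite graph and replace one part, together with an extra part, by a bipartite-like graph of large girth and high chromatic number whose VC-dimension is bounded.
\item Concretely, take a Borsuk/Kneser-type graph of high girth, or a random graph of girth $g>|H|$ restricted so that every bipartite subgraph of $H$-size appearing between two classes is a forest.
\item Since every member of $\M(H)$ contains a cycle, any embedding of $H$ would force a member of $\M(H)$ into the sparse large-girth layer, which is impossible.
\item The minimum degree approaches $\frac{r-2}{r-1}n$ and the chromatic number is unbounded.
\item The main check is the VC bound: a high-girth graph of bounded degree within the special layer has $\VC\le 2$, and joins and complete multipartite additions raise it by only $O(r)$.
\end{itemize}

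The upper bound $\frac{r-3}{r-2}$ when $\M(H)$ contains a forest is the main obstacle. Let $G$ be $H$-free with $\VC(G)\le d$ and $\delta(G)\ge(\frac{r-3}{r-2}+\eps)n$.
\begin{enumerate}
\item Apply the Haussler packing lemma to the neighbourhoods, obtaining a bounded number of vertices $v_1,\dots,v_m$ such that every vertex $u$ has $|N(u)\triangle N(v_i)|\le\eps n/4$ for some $i$. This partitions $V(G)$ into $m$ classes.
\item Show that each class $V_i$ is independent, or at least has bounded chromatic number. Suppose $V_i$ contains many edges; vertices in $V_i$ share almost identical neighbourhoods of size above $\frac{r-3}{r-2}n$.
\item Using the degree condition, greedily find $r-3$ common-neighbourhood layers inside $N(v_i)$, as in the clique case of~\cite{KimEtAl}, and build a blown-up $K_{r-2}$ structure $(X_1,\dots,X_{r-2})$ of linear-size parts. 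An edge inside $V_i$, together with its near-identical neighbourhoods, extends this to a structure containing $K_{r-2}(t)$ joined completely to a set spanning a large bipartite graph.
\item Because a forest $T\in\M(H)$ embeds in any bipartite graph of large minimum degree, and in fact in a bounded-degree configuration, $H$ embeds. This is a contradiction.
\end{enumerate}
The difficulty is that $V_i$ may contain only a sparse high-chromatic graph, not a dense bipartite one. I would handle this by showing that $G[V_i]$ has bounded degeneracy-type structure, or by iterating the packing inside $N(v_i)$ and inducting on $r$: the common neighbourhood of a $K_{r-3}$-blow-up has relative density above $\frac{1}{r-2}$ in a suitable sense, reducing to the $r=3$ case. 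For $r=3$, a triangle-like graph $H$ whose decomposition family contains a forest $T$ requires bounded $\chi$ for $H$-free graphs with $\delta\ge\eps n$ and bounded VC-dimension. That case I would prove directly: a high-chromatic class yields a vertex $u$ whose near-twin class contains an edge $xy$, giving a large common neighbourhood $N(x)\cap N(y)$ in which $T$ embeds by greedy tree embedding into the dense bipartite graph between $N(x)$ and the twin class. This produces $H$.
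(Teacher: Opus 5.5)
Your Erd\H{o}s--Stone upper bound and your first lower-bound construction (essentially the paper's $G(C,g,r-2)$) are fine. The argument for the main upper bound $\dvc(H)\le\frac{r-3}{r-2}$ in the forest case, however, has a genuine gap. The Haussler packing step, which produces boundedly many near-twin classes $V_i$, is sound. What you never establish is that each $G[V_i]$ has bounded chromatic number. Steps~3--4 look for $K_{r-2}(t)$ completely joined to a set spanning a \emph{large} bipartite graph, and you concede this breaks down when $G[V_i]$ is sparse but highly chromatic. The remedies you propose (``bounded degeneracy-type structure'', iterating the packing and inducting on $r$, ``relative density above $\frac{1}{r-2}$ in a suitable sense'') are too vague to check. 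The $r=3$ sketch is wrong as stated. $N(x)\cap N(y)$ may be independent, so $T$ need not embed there. A copy of $T$ placed in the bipartite graph between $N(x)$ and the twin class has nothing providing $t$ vertices adjacent to all of it, so no $T\vee I_t$ is produced.

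The missing observation is that no density inside $V_i$ is needed, provided the two embeddings are done in the opposite order.
\begin{itemize}
\item If $\chi(G[V_i])>\vtx(F)$, then $G[V_i]$ is not $(\vtx(F)-1)$-degenerate, so it contains a copy of the forest $F$ itself.
\item Take the packing scale $k=\eps n/(4\vtx(F))$; it must shrink with $\vtx(F)$, and $\eps n/4$ is not enough. Every vertex $u$ of the copy satisfies $|N(u)\triangle N(v_i)|\le k$. Hence the common neighbourhood $W$ of the copy has $|W|\ge\delta(G)-\vtx(F)k\ge(\frac{r-3}{r-2}+\frac{3\eps}{4})n$.
\item Every $w\in W$ has $\deg_W(w)\ge|W|-(n-\delta(G))$. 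A short computation gives $\delta(G[W])\ge(\frac{r-4}{r-3}+\eta)|W|$ for some $\eta=\eta(r,\eps)>0$ when $r\ge4$; for $r=3$ you only need $|W|\ge t$.
\item Erd\H{o}s--Stone then yields $K_{r-2}(t)\subseteq G[W]$, so $F\vee K_{r-2}(t)\subseteq G$, a contradiction. Hence $\chi(G)\le m\,\vtx(F)$.
\end{itemize}
Completed this way, your packing route would be a simpler alternative to the paper's proof. The paper uses a random partition plus $\eps$-net transversals for $r=3$, and for $r\ge4$ it uses supersaturation, Assouad's dual bound and the $\eps$-net theorem to cover $V(G)$ by boundedly many common neighbourhoods $A_K$ of copies of $K_{r-2}(t)$. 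As written, though, your proposal does not contain this step.

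Separately, your no-forest lower-bound construction is misdescribed. If the sparse layer occupies two of the $r-1$ parts, its vertices have degree only about $\frac{r-3}{r-1}n$. What works is the join of a high-girth, high-chromatic graph $G'$ with $r-2$ independent sets of size $|V(G')|$. No Kneser-type graph or degree bound is needed. Its VC-dimension is at most $3$, because a shattered set must lie inside one part and girth at least $5$ forces $\VC(G')\le 2$.
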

	
	Theorem~\ref{thm:main} gives a complete two-valued classification. In comparison with~\eqref{eq:ordinary-values}, the intermediate value $(2r-5)/(2r-3)$ disappears. More precisely, Allen, B\"ottcher, Griffiths, Kohayakawa, and Morris~\cite{AllenEtAl} proved that $\delta_\chi(H)=(r-2)/(r-1)$ precisely when $\M(H)$ contains no forest, and that $\delta_\chi(H)=(r-3)/(r-2)$ precisely when $H$ is $r$-near-acyclic. Hence bounded VC-dimension changes the threshold exactly for those graphs whose decomposition family contains a forest but which are not $r$-near-acyclic.
	Several immediate consequences are worth recording. Since $K_2\in\M(K_r)$, Theorem~\ref{thm:main} recovers the clique result of Kim et al.~\cite{KimEtAl}. If $\M(H)$ contains no forest, then the bounded-VC threshold coincides with both the ordinary chromatic threshold and the Tur\'an density of $H$.
	
	A convenient reformulation of the forest condition explains the form of our upper-bound argument. For graphs $H_1$ and $H_2$, we write $H_1\vee H_2$ for their \textit{join}. For integers $r,t\geq 1$, let $K_r(t)$ be the complete $r$-partite graph with every part of order $t$. Proposition~\ref{prop:structural} shows that $\M(H)$ contains a forest if and only if $H\subseteq F\vee K_{r-2}(t)$ for some forest $F$. Thus, in the first case of Theorem~\ref{thm:main}, it suffices to find a bounded cover of $V(G)$ by common neighborhoods of copies of $K_{r-2}(t)$. Then each member of this cover must induce an $F$-free graph and therefore has bounded chromatic number.
	
	
	The paper is organized as follows. Section~\ref{sec:preliminaries} contains the VC-dimension tools and the required graph-theoretic facts. In Section~\ref{sec:upper} we prove the upper bounds, treating the case $r=3$ separately before establishing the common-neighborhood covering lemma for $r\geq 4$. Section~\ref{sec:lower} gives the bounded-VC lower-bound constructions and completes the proof of Theorem~\ref{thm:main}.
	
	We follow standard notation through. 
	Let $G$ be a graph. 
	For $S\subseteq V(G)$, $G[S]$ is the subgraph of $G$ induced by $S$. 
	Let $N_S(v)$ denote the neighborhood of vertex $v$ in $G[S]$ and
	$\deg_S(v)$ denote the cardinality of $N_S(v)$. 
	Usually, we use $N_G(v)$ and $\deg_G(v)$ to denote the neighborhood of $v$ in $G$ and the cardinality of $N_G(v)$, respectively.

	\section{Preliminaries}\label{sec:preliminaries}
	
	We first record the structural observation used throughout our proof.
	
	\begin{proposition}\label{prop:structural}
		Let $H$ be a graph with $\chi(H)=r\geq 3$. The following are equivalent.
		\begin{enumerate}
			\item[(i)] The decomposition family $\M(H)$ contains a forest.
			\item[(ii)] There exist a forest $F$ and an integer $t\geq 1$ such that $H\subseteq F\vee K_{r-2}(t)$.
		\end{enumerate}
	\end{proposition}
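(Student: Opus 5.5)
We prove the two implications separately. Both reduce to comparing a proper $r$-coloring of $H$ with the vertex partition of the join $F\vee K_{r-2}(t)$.

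For (i)$\Rightarrow$(ii), fix a proper $r$-coloring of $H$ with color classes $V_1,\dots,V_r$ such that $F:=H[V_1\cup V_2]$ is a forest. Such a coloring exists by the definition of $\M(H)$: the members of $\M(H)$ are exactly the graphs $H[V_i\cup V_j]$ obtained after deleting the other $r-2$ classes. Set $t:=\max_{3\le i\le r}|V_i|$, which is at least $1$ because $\chi(H)=r$ forces every class to be nonempty. We embed $H$ into $F\vee K_{r-2}(t)$ as follows. The vertices of $V_1\cup V_2$ are mapped identically onto $F$. For each $i\ge3$, the class $V_i$ is mapped injectively into the $(i-2)$-th part of $K_{r-2}(t)$. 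Now check that every edge of $H$ is preserved:
\begin{itemize}
\item edges inside $V_1\cup V_2$ are edges of $F$;
\item an edge between $V_1\cup V_2$ and some $V_i$ with $i\ge3$ is present because of the join;
\item an edge between $V_i$ and $V_j$ with $3\le i<j$ is present because distinct parts of $K_{r-2}(t)$ are completely joined.
\end{itemize}
No class $V_i$ contains an internal edge, so the map is an embedding and $H\subseteq F\vee K_{r-2}(t)$.

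For (ii)$\Rightarrow$(i), suppose $\phi$ embeds $H$ into $F\vee K_{r-2}(t)$, and let $W_1,\dots,W_{r-2}$ be the parts of $K_{r-2}(t)$. Fix a proper $2$-coloring $A\cup B$ of the forest $F$. Pulling back the partition $A,B,W_1,\dots,W_{r-2}$ along $\phi$ gives a partition of $V(H)$ into at most $r$ independent sets, hence a proper coloring of $H$ with at most $r$ colors. Since $\chi(H)=r$, all $r$ of these pulled-back sets must be nonempty, so this is a genuine proper $r$-coloring. Deleting the $r-2$ classes $\phi^{-1}(W_i)$ leaves $H[\phi^{-1}(V(F))]$. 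This graph is isomorphic to a subgraph of $F$, so it is a forest, and it is a member of $\M(H)$ by definition.

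The one point needing care is the nonemptiness of the pulled-back classes, and it is resolved by the equality $\chi(H)=r$. I also adopt the convention that members of $\M(H)$ are the induced bipartite graphs $H[V_i\cup V_j]$. Under that convention the subgraph relation in the second direction is enough, since any subgraph of a forest is again a forest. With these two points settled, the rest is routine bookkeeping.
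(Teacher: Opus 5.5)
Your proof is correct and follows the paper's argument: for (i)$\Rightarrow$(ii) you take $F=H[V_1\cup V_2]$ and place the remaining $r-2$ color classes into distinct parts of $K_{r-2}(t)$, and for (ii)$\Rightarrow$(i) you pull back a $2$-coloring of $F$ together with the parts of $K_{r-2}(t)$, then delete the $r-2$ outer classes to leave a subgraph of $F$. Your extra checks that the classes are nonempty (giving $t\ge 1$ and a genuine $r$-coloring) are harmless refinements that the paper leaves implicit.
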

	
	\begin{proof}
		Suppose first that $\M(H)$ contains a forest. Choose a proper $r$-coloring of $H$ with color classes $V_1,\ldots,V_r$ such that $H[V_1\cup V_2]$ is a fores. Let
		\[
		F:=H[V_1\cup V_2]
		\qquad
		\text{and}
		\qquad
		t:=\max\{|V_i|:3\leq i\leq r\}.
		\]
		Embedding each $V_i$, $3\leq i\leq r$, into a distinct part of $K_{r-2}(t)$ shows that $H\subseteq F\vee K_{r-2}(t)$.
		
		Conversely, suppose that $H\subseteq F\vee K_{r-2}(t)$ for a forest $F$. Color the two sides of a bipartition of $F$ with two colors and use one additional color for each part of $K_{r-2}(t)$. Restricting this coloring to $H$ gives a proper $r$-coloring. After deleting the $r-2$ color classes lying in $K_{r-2}(t)$, the remaining graph is a subgraph of $F$, and hence is a forest. Therefore $\M(H)$ contains a forest.
	\end{proof}
	
	\subsection{VC-dimension and transversals}
	
	Let $\F\subseteq 2^X$ be a set system. For $Y\subseteq X$, the \textit{trace} of $\F$ on $Y$ is $\F|_Y:=\{F\cap Y:F\in\F\}$.
	We say that $Y$ is \emph{shattered} by $\F$ if $\F|_Y=2^Y$. The \textit{VC-dimension} of $\F$, denoted by  $\VC(\F)$, is the maximum order of a set shattered by $\F$. For a graph $G$, put
	\[
	\N(G):=\{N_G(v):v\in V(G)\}
	\qquad
	\text{and}
	\qquad
	\VC(G):=\VC(\N(G)).
	\]
	A \emph{transversal} of $\F$ is a subset of $X$ meeting every member of $\F$, and $\tau(\F)$ denotes the minimum order of a transversal. A \emph{fractional transversal} is a function $\omega:X\to[0,1]$ such that for every $F\in\F$,
	$\sum_{x\in F}\omega(x)\geq 1$.
	The minimum possible value of $\sum_{x\in X}\omega(x)$ is denoted by $\tau^*(\F)$. We use the following standard consequence of the $\eps$-net theorem of Haussler and Welzl~\cite{HausslerWelzl}.
	
	\begin{lemma}(\cite{HausslerWelzl})\label{lem:epsilon-net}
		Every set system $\F$ with $\VC(\F)\leq d$ satisfies
		$\tau(\F)\leq 16d\tau^*(\F)\log\bigl(2d\tau^*(\F)\bigr)$.
	\end{lemma}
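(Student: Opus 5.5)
This statement is the Haussler--Welzl $\eps$-net theorem in transversal form, so the plan is to derive it from the $\eps$-net theorem itself. Write $\tau^*=\tau^*(\F)$. We may assume $X$ is finite, $\F$ is nonempty, and $\emptyset\notin\F$; otherwise either $\tau^*=\infty$ and the claim is vacuous, or everything is trivial. If $\tau^*<1$ no fractional transversal exists unless $\F=\emptyset$, so we may also assume $\tau^*\geq 1$. Take an optimal fractional transversal $\omega$ and normalize it to a probability measure $\mu(x)=\omega(x)/\tau^*$ on $X$. Every $F\in\F$ then has $\mu(F)\geq 1/\tau^*=:\eps$. Consequently any $\eps$-net for $(X,\F,\mu)$ meets every member of $\F$, i.e.\ it is a transversal. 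It therefore suffices to show that an $\eps$-net of size at most $16d\tau^*\log(2d\tau^*)$ exists.

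For the existence of a small net I would use the standard double-sampling argument. Draw $m$ independent points $S$ from $\mu$, and let $E_1$ be the event that some $F\in\F$ with $\mu(F)\geq\eps$ misses $S$. Draw a second independent sample $T$ of size $m$, and let $E_2$ be the event that some such $F$ misses $S$ but contains at least $\eps m/2$ points of $T$. By Chebyshev (for $m\geq 8/\eps$) we get $\Pr[E_2]\geq \Pr[E_1]/2$. To bound $\Pr[E_2]$, condition on the multiset $S\cup T$ of $2m$ points and randomly split it into halves. By the Sauer--Shelah lemma, $\F$ has at most $(2em/d)^d$ traces on these $2m$ points. For a fixed trace of size at least $\eps m/2$, the probability that all its points land in $T$ is at most $2^{-\eps m/2}$. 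Hence $\Pr[E_1]\leq 2(2em/d)^d2^{-\eps m/2}$.

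The main obstacle is the arithmetic of choosing $m$ so that this bound is below $1$ while keeping the clean constant $16$. I would set $m=\lceil 16 d\tau^*\log(2d\tau^*)\rceil$ and check that $\eps m/2 = 8d\log(2d\tau^*)$ dominates $d\log_2(2em/d)+1$. This amounts to comparing $8\log(2d\tau^*)$ with $\log_2\bigl(32e\tau^*\log(2d\tau^*)\bigr)$ plus $1/d$, which holds for all $d\geq1$ and $\tau^*\geq1$ after an elementary estimate. One must be careful with the base of the logarithm and with the degenerate case $2d\tau^*$ small, where $\log(2d\tau^*)$ may be less than $1$. That case is handled directly: then $\tau^*<e/2$ and $d=1$, and a transversal of size $O(1)$ is obtained by a greedy or VC-dimension-$1$ argument. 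Alternatively, since the paper cites~\cite{HausslerWelzl}, one can simply invoke their theorem, which says that $\eps$-nets of size $\frac{8d}{\eps}\log\frac{8d}{\eps}$ exist, and absorb the constants; either route completes the reduction above.
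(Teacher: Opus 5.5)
The paper gives no proof of this lemma; it only cites Haussler--Welzl and calls it a ``standard consequence'' of the $\eps$-net theorem. Your reduction is exactly that consequence: normalize an optimal fractional transversal to $\mu=\omega/\tau^*$, take $\eps=1/\tau^*$, and note that an $\eps$-net is a transversal. Your double-sampling sketch of the $\eps$-net theorem is also the standard one and is sound as far as it goes. The genuine gap is checking the explicit constant $16$ when $d\tau^*$ is small, which is exactly what you leave to ``an elementary estimate'' and a hand-waved degenerate case.

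\textbf{If $\log=\ln$.} The case $d=1$, $1\le\tau^*<e/2$ that you set aside really does need a separate argument. At $d=\tau^*=1$ you would need $8\ln 2\approx5.5$ to beat $\log_2(32e\ln 2)+1\approx6.9$, which fails.

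Your phrase ``a transversal of size $O(1)$ by a greedy or VC-dimension-$1$ argument'' does not settle this case. You need the explicit bound $\tau\le 16\tau^*\ln(2\tau^*)$, i.e.\ $\tau\le 11$. Greedy only gives $\tau\le(1+\ln\Delta)\tau^*$, where $\Delta$ is the maximum number of sets through a point, and this depends on the size of $\F$.

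One way to close the case is the same double sampling with sharper counting:
take $m=11$ and threshold $5$. Chebyshev still gives probability at least $1/2$ for $|T\cap F|\ge 5$, since $\mu(F)>2/e$. There are at most $1+2m=23$ traces, and the exact hypergeometric bound per trace is $\binom{17}{6}/\binom{22}{11}<0.018$. This gives failure probability at most $2\cdot 23\cdot 0.018<1$.

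Even at $\tau^*=e/2$, with $m$ rounded down to $21$, your crude bound $2(2em/d)^d2^{-\eps m/2}$ is about $1.08$. So there you need the integer threshold $\lceil\eps m/2\rceil=8$.

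Your fallback of quoting Haussler--Welzl's $\frac{8d}{\eps}\log\frac{8d}{\eps}$ does not help in this regime. The inequality $8d\tau^*\log(8d\tau^*)\le 16d\tau^*\log(2d\tau^*)$ holds only when $d\tau^*\ge 2$.

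\textbf{If $\log=\log_2$} (Haussler--Welzl's convention), there is no exceptional case. With $a=\log_2(2d\tau^*)\ge1$, the required inequality reduces to $7a-\log_2 a>\log_2(16e/d)+1/d+2^{-a}$, where the $2^{-a}$ accounts for rounding $m$ down. This holds for all $a\ge 1$; at $d=a=1$ it reads $7>6.94$, so the margin is thin.

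\textbf{Rounding.} In either reading, take $m=\lfloor 16d\tau^*\log(2d\tau^*)\rfloor$ rather than the ceiling. A net of size $\lceil x\rceil$ only gives $\tau\le\lceil x\rceil$, not $\tau\le x$.
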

	
	For an indexed set system $\F=\{F_i:i\in I\}\subseteq 2^X$, its \textit{dual} is the set system $\F^*$ on ground set $I$ whose members are
	\[
	F_x^*:=\{i\in I:x\in F_i\},\]
	for $x\in X$.
	By a classical result of Assouad \cite{Assouad}, bounded VC-dimension is preserved under taking the dual set system.
	\begin{lemma}\label{lem:dual}(\cite{Assouad})
		If $\VC(\F)\leq D$, then $\VC(\F^*)\le 2^{D+1}-1$.
	\end{lemma}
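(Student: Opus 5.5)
The plan is to prove Assouad's bound by the standard encoding argument: a large shattered set in $\F^*$ produces a large shattered set in $\F$. Suppose $\VC(\F^*)\geq 2^{D+1}$. Then there is a set $I_0\subseteq I$ of indices with $|I_0|=2^{D+1}$ that is shattered by $\F^*$. We label the elements of $I_0$ by the binary vectors of length $D+1$, that is, write $I_0=\{i_S : S\subseteq [D+1]\}$. From this we will build a set of $D+1$ elements of $X$ shattered by $\F$, contradicting $\VC(\F)\leq D$.

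\emph{Step 1: choose ground elements.} For each coordinate $j\in[D+1]$, consider the subset $A_j:=\{i_S: j\in S\}\subseteq I_0$. Since $I_0$ is shattered by $\F^*$, there is some $x_j\in X$ with $F^*_{x_j}\cap I_0=A_j$. Unwinding the definition of the dual, this means that for every $S\subseteq[D+1]$,
\[
x_j\in F_{i_S}\iff i_S\in F^*_{x_j}\iff j\in S.
\]

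\emph{Step 2: verify shattering.} Let $Y:=\{x_1,\dots,x_{D+1}\}$. The $x_j$ are distinct: for $j\neq j'$, the set $F_{i_{\{j\}}}$ contains $x_j$ but not $x_{j'}$. Hence $|Y|=D+1$. For any $T\subseteq[D+1]$, Step 1 gives $F_{i_T}\cap Y=\{x_j:j\in T\}$. So every subset of $Y$ is realized as a trace, $Y$ is shattered by $\F$, and $\VC(\F)\geq D+1$. This is a contradiction, and therefore $\VC(\F^*)\leq 2^{D+1}-1$.

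There is no real obstacle here. The only point needing care is the bookkeeping between the two roles of the dual: a trace on indices in $\F^*$ corresponds to a membership pattern of points in $\F$. Once the elements of $I_0$ are indexed by subsets of $[D+1]$, the argument is immediate. A minor technical remark: if $\F$ is an indexed family with repeated members, the argument is unaffected, since distinct indices $i_S$ are all that is used.
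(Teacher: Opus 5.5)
Your proof is correct. You label the $2^{D+1}$ shattered indices by subsets of $[D+1]$, choose $x_j$ realizing the trace $\{i_S : j\in S\}$, and read off $F_{i_T}\cap Y=\{x_j : j\in T\}$. This is the standard encoding argument, and you handle the distinctness of the $x_j$ and the passage to a shattered set of size exactly $2^{D+1}$ properly. The paper gives no proof of this lemma and only cites Assouad, so your write-up supplies the classical argument behind that citation and makes the step self-contained.
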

	
	We shall also use the Sauer--Shelah lemma~\cite{Sauer,Shelah}.
	
	\begin{lemma}[Sauer--Shelah~\cite{Sauer,Shelah}]\label{lem:sauer}
		Every set system $\F$ with $\VC(\F)\leq d$, then for every $m$-element set $Y\subseteq X$,
		\[
		|\F|_Y|\leq\sum_{i=0}^{d}\binom{m}{i}.
		\]
		In particular, if $m\geq d\geq 1$, then
		$|\F|_Y|\leq \left(\frac{em}{d}\right)^d$.
	\end{lemma}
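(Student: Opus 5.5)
The plan is to reduce the statement to a bound on the size of a set system with small VC-dimension on an $m$-element ground set, prove that bound by induction on $m$ (the usual compression argument), and then derive the exponential estimate from the binomial sum by a short calculation.

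First, I reduce to the trace. Put $\A:=\F|_Y\subseteq 2^Y$. Any $S\subseteq Y$ shattered by $\A$ is also shattered by $\F$, since $(F\cap Y)\cap S=F\cap S$. Hence $\VC(\A)\le\VC(\F)\le d$. So it suffices to prove the following claim.

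\emph{Claim.} Write $\Phi_d(m):=\sum_{i=0}^{d}\binom{m}{i}$. Every family $\A$ of subsets of an $m$-element set $Y$ with $\VC(\A)\le d$ satisfies $|\A|\le\Phi_d(m)$.

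I prove the claim by induction on $m+d$.

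\emph{Base cases.} If $m=0$, then $|\A|\le 1=\Phi_d(0)$. If $d=0$, then $\A$ shatters no singleton, and I show $|\A|\le1$. Suppose $A\ne B$ both lie in $\A$, and pick $y\in A\triangle B$. Then the traces of $A$ and $B$ on $\{y\}$ are $\emptyset$ and $\{y\}$ in some order, so $\{y\}$ is shattered, a contradiction.

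\emph{Inductive step.} Fix $y\in Y$, let $Y'=Y\setminus\{y\}$, and define
\[
\A_1:=\{A\setminus\{y\}:A\in\A\}
\qquad\text{and}\qquad
\A_2:=\{B\subseteq Y':B\in\A\text{ and }B\cup\{y\}\in\A\}.
\]
The map $A\mapsto A\setminus\{y\}$ from $\A$ onto $\A_1$ is at most two-to-one. Its fibres of size two are exactly the pairs $\{B,B\cup\{y\}\}$ with $B\in\A_2$, so $|\A|=|\A_1|+|\A_2|$.

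The family $\A_1$ is a trace of $\A$, so $\VC(\A_1)\le d$. The key point is that $\VC(\A_2)\le d-1$. Suppose $S\subseteq Y'$ is shattered by $\A_2$, and let $T\subseteq S\cup\{y\}$. Choose $B\in\A_2$ with $B\cap S=T\setminus\{y\}$. Then either $B$ or $B\cup\{y\}$ is a member of $\A$ whose trace on $S\cup\{y\}$ equals $T$. Hence $S\cup\{y\}$ is shattered by $\A$, which forces $|S|\le d-1$.

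By induction and Pascal's rule,
\[
|\A|\le\Phi_d(m-1)+\Phi_{d-1}(m-1)=\Phi_d(m).
\]
The only step needing care is the verification that $\A_2$ has VC-dimension at most $d-1$; everything else is bookkeeping.

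For the ``in particular'' part, assume $m\ge d\ge1$. Then $d/m\le 1$, so $(d/m)^d\le(d/m)^i$ for every $i\le d$. Therefore
\[
\Bigl(\frac dm\Bigr)^{d}\Phi_d(m)\le\sum_{i=0}^{d}\binom mi\Bigl(\frac dm\Bigr)^{i}\le\Bigl(1+\frac dm\Bigr)^{m}\le e^{d}.
\]
Rearranging gives $|\F|_Y|\le\Phi_d(m)\le(em/d)^d$.
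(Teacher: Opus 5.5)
Your proof is correct. The reduction to $\A=\F|_Y$, the identity $|\A|=|\A_1|+|\A_2|$, the key observation that any $S$ shattered by $\A_2$ makes $S\cup\{y\}$ shattered by $\A$, Pascal's rule, and the estimate $(d/m)^d\Phi_d(m)\le(1+d/m)^m\le e^d$ all hold.

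The paper gives no proof of this lemma; it cites Sauer and Shelah and uses the result as a black box. You have therefore supplied the standard self-contained inductive proof.
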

	
	The following lemma establishes bounds on set systems generated by common neighborhoods.
	
	\begin{lemma}\label{lem:intersections}
		Let $\N\subseteq 2^X$ with $\VC(\N)\leq d$ and for $q\geq 1$, let
		\[
		\A:=\{N_1\cap\cdots\cap N_q:N_1,\ldots,N_q\in\N\}.
		\]
		Then $\VC(\A)\leq D(d,q)$ for some constant $D(d,q)$ depending only on $d$ and $q$.
	\end{lemma}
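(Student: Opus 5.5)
The plan is to bound the VC-dimension of $\A$ by counting traces and applying Lemma~\ref{lem:sauer}. Let $Y\subseteq X$ be a set of order $m$ shattered by $\A$. We will show that $m$ is bounded by a function of $d$ and $q$ alone, and take $D(d,q)$ to be that bound.

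The key observation is that the trace of an intersection is determined by the traces of its members:
\[
(N_1\cap\cdots\cap N_q)\cap Y=(N_1\cap Y)\cap\cdots\cap(N_q\cap Y).
\]
Hence every member of $\A|_Y$ arises from some $q$-tuple of elements of $\N|_Y$, and so
\[
|\A|_Y|\leq |\N|_Y|^q .
\]

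Suppose $m\geq d\geq 1$. Then Lemma~\ref{lem:sauer} applied to $\N$ gives $|\N|_Y|\leq (em/d)^d$. Shattering means $|\A|_Y|=2^m$, so
\[
2^m\leq \left(\frac{em}{d}\right)^{dq}.
\]
Taking logarithms, $m\leq dq\log_2(em/d)$. Since the left side grows linearly in $m$ and the right side only logarithmically, this inequality fails for all $m$ beyond an explicit threshold of order $dq\log(dq)$. For instance, $m\le C\,dq\log_2(2q)$ for a suitable absolute constant $C$; any explicit constant suffices for the lemma.

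The degenerate cases are handled separately. If $m<d$, then $m$ is already bounded by $d$. If $d=0$, then $|\N|_Y|\leq 1$, so $|\A|_Y|\le 1$, which forces $m=0$. Thus
\[
D(d,q):=\max\{d,\;\text{the threshold above}\}
\]
works.

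There is no real obstacle in this argument. The only step requiring care is the elementary inequality $m\le dq\log_2(em/d)\Rightarrow m=O(dq\log q)$. Note also that the hypothesis $q\geq1$ is what allows the trace-counting bound $|\A|_Y|\leq |\N|_Y|^q$ to be applied.
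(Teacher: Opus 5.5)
Your proposal is correct and follows the paper's proof: bound $|\A|_Y|\le|\N|_Y|^q$ by intersecting traces, then combine with Sauer--Shelah to get $2^m\le (em/d)^{dq}$, which bounds $m$ in terms of $d$ and $q$. Your separate treatment of $d=0$, where the $(em/d)^d$ form of Sauer--Shelah does not apply, and your explicit $O(dq\log q)$ threshold are small refinements that the paper leaves implicit.
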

	
	\begin{proof}
		Let $Y$ be an $m$-element set shattered by $\A$. If $m<d$, there is nothing to prove. Otherwise, every trace in $\A|_Y$ is the intersection of $q$ traces in $\N|_Y$. 
		For every $A\in\mathcal A$, there exist
		$N_1,\ldots,N_q\in\mathcal N$ such that
		$A=N_1\cap\cdots\cap N_q$.
		Hence, 
		\[
		A\cap Y
		=(N_1\cap\cdots\cap N_q)\cap Y
		=(N_1\cap Y)\cap\cdots\cap(N_q\cap Y).
		\]
		It follows that every member of $\mathcal A|_Y$is the intersection of
		$q$ members of $\mathcal N|_Y$. Then
		there are at most
		$\lvert\mathcal N|_Y\rvert^q$ ordered \(q\)-tuples of such traces.
		Different $q$-tuples may yield the same intersection, and consequently
		$\lvert\mathcal A|_Y\rvert
		\leq
		\lvert\mathcal N|_Y\rvert^q$.
		By Lemma~\ref{lem:sauer}, we have
		\[
		2^m=|\A|_Y|
		\leq |\N|_Y|^q
		\leq \left(\frac{em}{d}\right)^{dq}.
		\]
		The left-hand side grows exponentially in $m$, whereas the right-hand side is a polynomial of degree $dq$. Hence, $m$ is bounded in terms of $d$ and $q$ only.
	\end{proof}


	\subsection{Graph-theoretic tools}
	
	We use two standard facts. The first is an elementary degeneracy argument. The second follows from the Erd\H{o}s--Simonovits supersaturation theorem~\cite{ErdosSimonovitsSupersaturation}.
	For convenience, we write $\vtx(F):=|V(F)|$ for the order of a graph $F$.
	
	\begin{lemma}\label{lem:forest-color}
		If $F$ is a forest and $G$ is $F$-free, then
		$\chi(G)\leq \vtx(F)$.
	\end{lemma}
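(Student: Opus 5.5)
The plan is to prove the contrapositive in its standard form: if $\chi(G)>\vtx(F)-1$, then $G$ contains a subgraph of minimum degree at least $\vtx(F)-1$, and any graph with $\delta\ge\vtx(F)-1$ contains $F$. This gives $\chi(G)\le\vtx(F)$, in fact with one color to spare. Write $k:=\vtx(F)$. If $k=0$ the statement is vacuous, since every graph contains the empty graph and so no $G$ is $F$-free. So assume $k\ge 1$.

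The first step is a degeneracy argument. Suppose every subgraph of $G$ has a vertex of degree at most $k-2$. Then $G$ is $(k-2)$-degenerate. Order the vertices by repeatedly removing a vertex of minimum degree, and color greedily in the reverse order. Each vertex then has at most $k-2$ previously colored neighbors, so $k-1$ colors suffice and $\chi(G)\le k-1\le k$. The case $k=1$ needs a separate remark: $F=K_1$, so an $F$-free graph has no vertices and $\chi(G)=0$. Otherwise, $G$ has a subgraph $G'$ with $\delta(G')\ge k-1$.

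The second step is to embed $F$ greedily into $G'$. First, add edges to $F$ to obtain a tree $T$ on the same $k$ vertices; this is always possible by connecting components, and $F\subseteq T$. Order $V(T)$ as $u_1,\dots,u_k$ so that each $u_j$ with $j\ge2$ has exactly one neighbor among earlier vertices, for example by breadth-first order. Map $u_1$ to any vertex of $G'$. When placing $u_j$, its earlier neighbor's image has at least $k-1$ neighbors in $G'$. At most $j-2\le k-2$ of these are already used, so a fresh image is available. This embeds $T$, hence $F$, into $G$, which contradicts $F$-freeness.

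There is no real obstacle here. The only points needing care are the degenerate small cases and the observation that completing the forest to a spanning tree, so that the greedy order exists, does not hurt, because containing $T$ implies containing $F$.
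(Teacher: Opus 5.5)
Your proof is correct and is essentially the paper's argument: an $F$-free graph must be degenerate, since a subgraph of large minimum degree would admit a greedy embedding of $F$, and greedy coloring then finishes. The only differences are cosmetic. You complete $F$ to a spanning tree instead of embedding one component at a time, and you use minimum degree $\vtx(F)-1$ rather than $\vtx(F)$, which gives the slightly sharper bound $\chi(G)\le\vtx(F)-1$ when $\vtx(F)\ge 2$.
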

	
	\begin{proof}
		Every subgraph of $G$ contains a vertex of degree smaller than $\vtx(F)$; otherwise a greedy embedding, one component at a time, would produce a copy of $F$. Hence $G$ is $(\vtx(F)-1)$-degenerate and is therefore $\vtx(F)$-colorable.
	\end{proof}
	
	\begin{lemma}(\cite{ErdosSimonovitsSupersaturation})\label{lem:supersaturation}
		Let $r\geq 2$, $t\geq 1$, and $\eta>0$. There exist constants $\gamma=\gamma(r,t,\eta)>0$ and $n_0=n_0(r,t,\eta)$ such that every graph $G$ on $n\geq n_0$ vertices with
		$\delta(G)\geq\left(\frac{r-2}{r-1}+\eta\right)n$
		contains at least $\gamma n^{rt}$ labeled copies of $K_r(t)$.
	\end{lemma}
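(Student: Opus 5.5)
This is the standard supersaturation statement; I would derive it from the Erdős--Stone--Simonovits theorem together with the Erdős--Simonovits supersaturation principle, via a two-level averaging argument.

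\emph{Step 1: many copies of $K_r$.} Since $\delta(G)\geq\bigl(\frac{r-2}{r-1}+\eta\bigr)n$, the edge count is at least $\bigl(\frac{r-2}{r-1}+\eta\bigr)\frac{n^2}{2}$. Fix an integer $m=m(r,t,\eta)$ large enough that every $m$-vertex graph with at least $\bigl(\frac{r-2}{r-1}+\frac{\eta}{2}\bigr)\binom{m}{2}$ edges contains $K_r(t)$; this is possible by Erdős--Stone--Simonovits, or by Turán's theorem plus a Kővári--Sós--Turán-type argument. For a uniformly random $m$-subset $S\subseteq V(G)$, the expected edge density of $G[S]$ equals the edge density of $G$, which exceeds $\frac{r-2}{r-1}+\eta$ up to $O(1/n)$. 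Since density is at most $1$, Markov's inequality applied to the complementary density shows that at least a $c(\eta,r)>0$ fraction of the $m$-sets $S$ have density at least $\frac{r-2}{r-1}+\frac{\eta}{2}$. Each such $G[S]$ contains a copy of $K_r(t)$.

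\emph{Step 2: count copies.} Each copy of $K_r(t)$ on $rt$ vertices lies in at most $\binom{n-rt}{m-rt}$ sets $S$. Hence the number of unlabeled copies is at least
\[
c\binom{n}{m}\Big/\binom{n-rt}{m-rt}\geq c'\,n^{rt}
\]
with $c'$ depending only on $m,r,t,\eta$. Multiplying by at least $1$ for labelings gives $\gamma n^{rt}$ labeled copies once $n\geq n_0$, where $n_0$ is chosen so that $n\geq m$ and the $O(1/n)$ error above is below $\eta/4$.

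\emph{Main obstacle.} The only nontrivial step is the existence of $m$ in Step 1, i.e. Erdős--Stone--Simonovits. If one wants a self-contained argument instead, first count $K_r$'s by a greedy argument: choosing common neighbors successively, each new vertex has at least $\bigl(\frac{1}{r-1}+\ldots\bigr)\eta n$ choices, giving $\Omega(n^r)$ copies. Then blow up to $K_r(t)$ by the standard hypergraph Kővári--Sós--Turán argument: an $r$-uniform hypergraph with $\Omega(n^r)$ edges contains $\Omega(n^{rt})$ copies of $K^{(r)}_r(t)$, proved by iterated Cauchy--Schwarz or Jensen. Both routes yield only constants depending on $r,t,\eta$, as required.
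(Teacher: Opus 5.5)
Your proposal is correct and follows essentially the route behind the statement. The paper gives no proof of its own and simply cites Erd\H{o}s--Simonovits, and your averaging over random $m$-subsets combined with Erd\H{o}s--Stone--Simonovits, followed by the double count $\binom{n}{m}/\binom{n-rt}{m-rt}=\binom{n}{rt}/\binom{m}{rt}$, is exactly the classical supersaturation argument. The minimum-degree hypothesis enters only through $e(G)\geq\bigl(\frac{r-2}{r-1}+\eta\bigr)\binom{n}{2}$, which holds exactly, so no $O(1/n)$ correction is needed.
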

	
	\section{The upper bound}\label{sec:upper}
	
	We first prove the upper bound in the case where $\M(H)$ contains a forest. By Proposition~\ref{prop:structural}, we may choose a forest $F$ and an integer $t\geq 1$ such that
	\begin{equation}\label{eq:ambient-join}
		H\subseteq F\vee K_{r-2}(t).
	\end{equation}
	The case $r=3$ requires only the $\eps$-net theorem. For $r\geq 4$, we combine it with supersaturation.
	
	\subsection{The case \texorpdfstring{$r=3$}{r=3}}
	We denote an independent set with $t$ vertices by $I_t$.
	\begin{theorem}\label{thm:r3}
		Let $t\geq 1$, $F$ be a forest, and suppose that $H\subseteq F\vee I_t$. For every $c>0$ and $d\in\mathbb N$, there is a constant $C=C(F,t,c,d)$ such that every $n$-vertex $H$-free graph $G$ with $\VC(G)\leq d$ and $\delta(G)\geq cn$ satisfies $\chi(G)\leq C$.
	\end{theorem}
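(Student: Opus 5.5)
The strategy is to cover $V(G)$ by a bounded number of sets, each of the form $N_G(v_1)\cap\cdots\cap N_G(v_t)$ where $v_1,\ldots,v_t$ range over a $t$-element set $T$. For each such $T$, the set $N_T:=\bigcap_{v\in T}N_G(v)$ induces an $F$-free graph. Indeed, a copy of $F$ inside $G[N_T]$ together with $T$ would give a copy of $F\vee I_t\supseteq H$. This requires the copy of $F$ to avoid $T$, which is automatic because the vertices of $T$ are not adjacent to themselves, so $T\cap N_T=\emptyset$. By Lemma~\ref{lem:forest-color}, $\chi(G[N_T])\le \vtx(F)$. So if $V(G)$ is covered by $K$ such sets plus a set of vertices of bounded chromatic number, then $\chi(G)\le K\vtx(F)+O(1)$.

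To find such a cover, I will work with the dual viewpoint. A vertex $x$ lies in $N_T$ iff $T\subseteq N_G(x)$, so it suffices to find a bounded family $\mathcal{T}$ of $t$-sets such that every vertex $x$ has some $T\in\mathcal{T}$ with $T\subseteq N_G(x)$.

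First I handle the neighborhoods with a bounded transversal. Consider $\N(G)$ on ground set $V(G)$, which has $\VC\le d$. The uniform weight $\omega\equiv 1/(cn)$ is a fractional transversal, since every neighborhood has size at least $cn$. Hence $\tau^*(\N(G))\le 1/c$, and Lemma~\ref{lem:epsilon-net} yields a transversal $S$ of size $s\le s(c,d)$. Every vertex $x$ then has a neighbor in $S$, so $V(G)=\bigcup_{u\in S}N_G(u)$. This handles $t=1$ directly.

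For $t\ge 2$, I need $t$ common points, and I will iterate. For each $u\in S$, consider the sets $N_G(x)\cap N_G(u)$ for $x\in N_G(u)$. These are not necessarily large, so I instead partition by pairs. Call a vertex $x$ \emph{good} if it shares at least $\eps n$ neighbors with some vertex in the current structure. I expect the cleanest route is the following. Apply Lemma~\ref{lem:epsilon-net} to the family $\A_u=\{N_G(x)\setminus\{u\}\}$; this only removes a point and keeps degree at least $cn-1$, so it gives a bounded set $S_2$ meeting each neighborhood in a second point. Repeating $t$ times with the already-chosen points deleted from the ground set produces a bounded set $S^{(t)}$, with $|S^{(t)}|\le M(c,d,t)$, such that every $N_G(x)$ contains at least $t$ points of $S^{(t)}$. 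Concretely, at each step I take a transversal of $\{N_G(x)\setminus S^{(j)}\}$; each member still has size at least $cn-|S^{(j)}|\ge cn/2$ for $n$ large, and the family still has VC-dimension at most $d$ because traces on a ground set do not increase VC-dimension. Then $\mathcal{T}=\binom{S^{(t)}}{t}$ works, with $|\mathcal{T}|\le\binom{M}{t}$, and every $x$ lies in some $N_T$. Small $n$, meaning $n<2M/c$, is absorbed into the constant since $\chi(G)\le n$.

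The main obstacle I anticipated was needing many common neighbors of a $t$-set, which would normally require supersaturation-type counting. The observation that removing the already-chosen points and re-applying the $\eps$-net bound iteratively suffices is the key step. I should double check that the bound on $\tau^*$ survives the deletions; it does, since the deleted set has constant size while the degree is linear. This gives $\chi(G)\le \binom{M}{t}\vtx(F)$, with $M$ depending only on $c,d,t$, whenever $n\ge 2M/c$.
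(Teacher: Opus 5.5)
Your proof is correct, but it produces the $t$ distinct common neighbors by a different device from the paper's.

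The paper first partitions $V(G)$ at random into $t+1$ parts $V_0,\ldots,V_t$ and uses Chernoff's inequality to ensure $\deg_{V_j}(v)\ge cn/(2(t+1))$ for every $v$ and $j$. For each $i\neq j$ it then takes a bounded transversal $T_i^j\subseteq V_j$ of $\{N_G(v)\cap V_j: v\in V_i\}$. The $t$ chosen neighbors of a vertex $v\in V_i$ are automatically distinct because the parts are disjoint. Each $V_i$ is covered separately, giving $\chi(G)\le (t+1)L^t\vtx(F)$.

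You instead build a single bounded set greedily. At each step you take a transversal of the trace of $\N(G)$ on $V(G)\setminus S^{(j)}$. Because the new transversal avoids $S^{(j)}$, induction gives $|N_G(x)\cap S^{(j)}|\ge j$ for every $x$. Because only boundedly many points have been removed, every member still has size at least $cn/2$, so $\tau^*\le 2/c$ at every step.

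What each route buys: the paper gets distinctness from disjoint parts, at the cost of a Chernoff step and a factor $t+1$ loss in the degree inside each part. You get distinctness by deleting earlier choices from the ground set, which costs nothing against linear degree. Your argument is deterministic and covers $V(G)$ in one go. It also yields a constant $\binom{M}{t}\vtx(F)$ that is no worse than the paper's.

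Two tidy-ups for the write-up:
\begin{itemize}
\item Delete the exploratory sentences about pairs, ``good'' vertices and the family $\A_u$; the ``concretely'' iteration is the entire argument.
\item Fix $L:=16d(2/c)\log(4d/c)$ and $M:=tL$ before the induction. This makes it visible that the hypothesis $|S^{(j)}|\le M\le cn/2$, used to bound $\tau^*$, is not circular.
\end{itemize}
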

	
	\begin{proof}
		We may assume $0<c\leq 1$. Choose $n_0=n_0(t,c)$ sufficiently large. If $n<n_0$, then $\chi(G)\leq n_0$, so assume that $n\geq n_0$.
		
		Assign every vertex of $G$ independently and uniformly to one of $t+1$ parts $V_0,\ldots,V_t$. For every vertex $v$ and every $i$, we have  $\mathbb{E}[\deg_{V_i}(v)]=\deg_G(v)/(t+1)$. Chernoff's inequality gives
		\[
		\mathbb{P}\left[\deg_{V_i}(v)<\frac{\deg_G(v)}{2(t+1)}\right]
		\leq \exp\left(-\frac{\deg_G(v)}{8(t+1)}\right)
		\leq \exp\left(-\frac{cn}{8(t+1)}\right).
		\]
		For $n_0$ sufficiently large, we deduce that
		\[
		\sum_{v\in V(G)}\mathbb{P}[\deg_{V_i}(v)<\frac{cn}{2(t+1)}]
		\le n\cdot \exp\left(-\frac{cn}{8(t+1)}\right)=o(1).
		\]
		Thus, there exists such a partition of $V(G)$ for which
		\begin{equation}\label{eq:balanced-neighborhoods}
			\deg_{V_i}(v)\geq\frac{cn}{2(t+1)}
		\end{equation}
		for every $v\in V(G)$ and $0\leq i\leq t$.
		
		For distinct $i,j\in\{0,\ldots,t\}$, define
		\[
		\F_i^j:=\{N_G(v)\cap V_j:v\in V_i\}.
		\]
		Since $\F_i^j$ is obtained by restricting members of $\N(G)$ to $V_j$, we have $\VC(\F_i^j)\leq d$. Give every vertex of $V_j$ weight $2(t+1)/(cn)$. By~\eqref{eq:balanced-neighborhoods}, every member of $\F_i^j$ has weight at least one, while the total weight is at most $k:=2(t+1)/c$.
		Thus $\tau^*(\F_i^j)\leq k$. Lemma~\ref{lem:epsilon-net} yields a transversal $T_i^j\subseteq V_j$ of order at most $L:=16dk\log(2dk)$.
		
		For a fixed $i$, let $\mathcal T_i$ be the family of $t$-element sets obtained by choosing one vertex from each $T_i^j$ with $j\neq i$. Since the parts $V_j$ are disjoint, these choices give $t$ distinct vertices, and $|\mathcal T_i|\leq L^t$.
		For every $v\in V_i$ and every $j\neq i$, the transversal $T_i^j$ meets $N_G(v)\cap V_j$. Hence $v$ belongs to the common neighborhood, inside $V_i$, of some $T\in\mathcal T_i$. Therefore
		\[
		V_i=\bigcup_{T\in\mathcal T_i}\left(V_i\cap\bigcap_{x\in T}N_G(x)\right).
		\]
		Each graph induced by a set in this union is $F$-free. Indeed, a copy of $F$ together with the vertices of $T$ would contain $F\vee I_t$ as a subgraph, and hence would contain $H$, contrary to the assumption that $G$ is $H$-free. By Lemma~\ref{lem:forest-color}, each member of the cover induces a graph of chromatic number at most $\vtx(F)$. Consequently,
		\[
		\chi(G[V_i])\leq L^t\vtx(F)
		\]
		for every $i$,
		and thus
		\[
		\chi(G)\leq(t+1)L^t\vtx(F).
		\]
	\end{proof}
	
	\subsection{A common-neighborhood covering lemma}
	
	For a copy $K$ of a given graph in $G$, write
	\[
	A_K:=\bigcap_{v\in V(K)}N_G(v)
	\]
	for its common neighborhood.
	
	\begin{lemma}\label{lem:cover}
		Let $r\geq 4$, $t\geq 1$, $c>0$, and $d\in\mathbb N$. There exist constants $C_0=C_0(r,t,c,d)$ and $n_0=n_0(r,t,c)$ such that every $n$-vertex graph $G$ with $n\geq n_0$, $\VC(G)\leq d$, and
		$\delta(G)\geq\left(\frac{r-3}{r-2}+c\right)n$
		contains copies $K_1,\ldots,K_m$ of $K_{r-2}(t)$, where $m\leq C_0$, such that $V(G)=\bigcup_{i=1}^{m}A_{K_i}$.
	\end{lemma}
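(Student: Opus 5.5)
Consider the set system $\A$ on ground set $V(G)$ whose members are the common neighborhoods $A_K$, where $K$ ranges over all copies of $K_{r-2}(t)$ in $G$. Since $K_{r-2}(t)$ has $q:=(r-2)t$ vertices, each $A_K$ is an intersection of $q$ members of $\N(G)$, so Lemma~\ref{lem:intersections} gives $\VC(\A)\le D(d,q)$. By Lemma~\ref{lem:dual}, the dual system $\A^*$ has VC-dimension at most $D^*:=2^{D+1}-1$. Its ground set is the set of copies $K$, and for each vertex $v$ it contains the member $A_v^*=\{K: v\in A_K\}$. So what we need is a transversal of $\A^*$ of bounded size: a bounded family of copies $K_1,\dots,K_m$ such that every vertex $v$ lies in some $A_{K_i}$. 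This is exactly the desired cover $V(G)=\bigcup_i A_{K_i}$. By Lemma~\ref{lem:epsilon-net}, it suffices to show that $\tau^*(\A^*)$ is bounded by a constant depending on $r,t,c$. The natural fractional transversal is the uniform weight $1/(\gamma n^{q})$ on every labeled copy, and then we need the following.

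\textbf{Key step.} Show that for every vertex $v$, the number of labeled copies $K$ of $K_{r-2}(t)$ with $v\in A_K$ is at least $\gamma n^{q}$. Since the total number of labeled copies is at most $n^{q}$, the total weight is then at most $1/\gamma$. Such copies are copies of $K_{r-2}(t)$ inside $G[N(v)]$. Let $U=N(v)$, so $|U|\ge(\frac{r-3}{r-2}+c)n$. Every $u\in U$ satisfies
\[
\deg_U(u)\ge |U|-(n-\delta(G))\ge |U|-\Bigl(\tfrac{1}{r-2}-c\Bigr)n .
\]
We want $\deg_U(u)\ge(\frac{r-4}{r-3}+\eta)|U|$, so that Lemma~\ref{lem:supersaturation} applies inside $G[U]$ with $r-2$ in place of $r$; this uses $r\ge4$ so that $r-2\ge2$. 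Writing $|U|=\alpha n$ with $\alpha\ge\frac{r-3}{r-2}+c$, the requirement becomes
\[
\Bigl(\frac{1}{r-3}-\eta\Bigr)\alpha\ \ge\ \frac{1}{r-2}-c .
\]
The left side is increasing in $\alpha$. At $\alpha=\frac{r-3}{r-2}+c$ it equals $\frac{1}{r-2}+\frac{c}{r-3}-\eta\alpha$, so it suffices to take $\eta=c/2$, for instance. Then Lemma~\ref{lem:supersaturation} yields at least $\gamma(r-2,t,\eta)|U|^{q}\ge\gamma\alpha_0^{q}n^{q}$ labeled copies in $G[U]$, where $\alpha_0=\frac{r-3}{r-2}$. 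This needs $|U|\ge n_0(r-2,t,\eta)$, which holds once $n$ is large in terms of $r,t,c$. Each such copy $K$ has $v\in A_K$.

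\textbf{Conclusion.} Setting $\gamma'=\gamma\alpha_0^{q}$, we get $\tau^*(\A^*)\le1/\gamma'$, and Lemma~\ref{lem:epsilon-net} gives a transversal of size at most $C_0:=16D^*\gamma'^{-1}\log(2D^*/\gamma')$. Labeled and unlabeled copies give the same sets $A_K$, so taking the ground set to be labeled copies is harmless.

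The main obstacle is the degree computation inside $N(v)$: the threshold $\frac{r-3}{r-2}$ has to translate exactly into the supersaturation threshold $\frac{r-4}{r-3}$ for $K_{r-2}(t)$ in the neighborhood, with a positive slack $\eta$. The calculation above suggests this works. The rest is bookkeeping of the VC bounds through Lemma~\ref{lem:intersections}, Lemma~\ref{lem:dual} and Lemma~\ref{lem:epsilon-net}.
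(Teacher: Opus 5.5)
Your proposal is correct and is essentially the paper's proof: your dual system $\A^*$ is exactly the paper's family $\B=\{B_x\}$, whose VC-dimension is bounded via Lemma~\ref{lem:intersections} and Lemma~\ref{lem:dual}, and the uniform fractional transversal on labeled copies is justified by supersaturation inside $G[N(v)]$ before applying Lemma~\ref{lem:epsilon-net}. The only difference is cosmetic: you take $\eta=c/2$ instead of the paper's exact slack $\eta=\frac{cs^2}{(s-1)(s-1+cs)}$ with $s=r-2$. That choice is valid because the statement is vacuous unless $c<\frac{1}{r-2}$; you should say this explicitly, since it is also what makes $\frac{1}{r-3}-\eta>0$ in your monotonicity step.
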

	
	\begin{proof}
		Put $s:=r-2$ and $\alpha:=\frac{s-1}{s}+c$.
		If $\alpha\geq 1$, then no such graph exists, so assume $\alpha<1$. Fix $x\in V(G)$, let $M_x:=G[N_G(x)]$, and set $m_x:=|N_G(x)|$. For every $y\in N_G(x)$,
		\[
		\deg_{M_x}(y)
		\geq \deg_G(y)+\deg_G(x)-n\geq \alpha n+m_x-n
		\geq \left(2-\frac1\alpha\right)m_x,
		\]
		where the last inequality follows from $m_x\geq\alpha n$. Furthermore,
		\[
		2-\frac1\alpha
		=
		\frac{s-2}{s-1}+\eta,\]
		where $\eta:=\frac{cs^2}{(s-1)(s-1+cs)}>0$.
		Since $m_x\geq\alpha n$ and $n_0$ is sufficiently large, Lemma~\ref{lem:supersaturation} implies that $M_x$ contains at least $\gamma m_x^{st}\geq\gamma\alpha^{st}n^{st}$
		labeled copies of $K_s(t)$, where $\gamma>0$ depends only on $r,t,c$. 
		
		Let $q:=st=(r-2)t$, and let $\K$ be the indexed family of all labeled copies of $K_s(t)$ in $G$. For each $x\in V(G)$, put
		\[
		B_x:=\{K\in\K:K\subseteq N_G(x)\}.
		\]
		It follows from $|\K|\leq n^q$, that
		$|B_x|\geq\gamma\alpha^qn^q\geq\eps|\K|$,
		where  $\eps:=\gamma\alpha^q>0$.
		Let 
		\[\A:=\{A_K:K\in\K\}\] be an indexed family  on ground set $V(G)$ and the family $\B:=\{B_x:x\in V(G)\}$ on ground set $\K$. Then, we have $x\in A_K$ if and only if $K\in B_x$, which shows that $\B$ is the dual system of $\A$. Every $A_K$ is an intersection of $q$ vertex neighborhoods. By means of Lemma~\ref{lem:intersections}, we arrive at $\VC(\A)\leq D(d,q)$.
		Together with Lemma~\ref{lem:dual}, it yields $\VC(\B)<2^{D(d,q)+1}=:D'$.
		
		Give each $K\in\K$ weight $1/(\eps|\K|)$. Every $B_x$ has weight at least one, and the total weight is $1/\eps$. Thus $\tau^*(\B)\leq1/\eps$. Lemma~\ref{lem:epsilon-net} gives
		\[
		\tau(\B)
		\leq 16D'\eps^{-1}\log(2D'\eps^{-1})
		=:C_0.
		\]
		Then there exists a transversal $\K_0\subseteq\K$ of $\B$ with $|\K_0|\leq C_0$. For every $x\in V(G)$, there exists some $K\in\K_0$ lying in $B_x$, equivalently $x\in A_K$. Therefore $
		V(G)=\bigcup_{K\in\K_0}A_K$, as required.
	\end{proof}
	
	\begin{theorem}\label{thm:upper-forest}
		Let $H$ be an $r$-chromatic graph, where $r\geq 3$, and suppose that $\M(H)$ contains a forest. For every $c>0$ and $d\in\mathbb N$, there is a constant $C=C(H,c,d)$ such that every $n$-vertex $H$-free graph $G$ with $\VC(G)\leq d$ and
		$\delta(G)\geq\left(\frac{r-3}{r-2}+c\right)n$
		satisfies $\chi(G)\leq C$.
	\end{theorem}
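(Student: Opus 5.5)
By Proposition~\ref{prop:structural}, the hypothesis that $\M(H)$ contains a forest lets us fix a forest $F$ and an integer $t\geq 1$ with $H\subseteq F\vee K_{r-2}(t)$, as in~\eqref{eq:ambient-join}. The proof then splits according to $r$.

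When $r=3$, the minimum degree condition reads $\delta(G)\geq cn$, and $K_{1}(t)=I_t$. Theorem~\ref{thm:r3} therefore applies verbatim and gives $\chi(G)\leq C(F,t,c,d)$.

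When $r\geq 4$, we may assume $c\leq 1$; larger $c$ only strengthens the hypothesis. We may also assume $n\geq n_0(r,t,c)$, the constant from Lemma~\ref{lem:cover}, since otherwise $\chi(G)\leq n<n_0$ trivially. Lemma~\ref{lem:cover} then provides copies $K_1,\ldots,K_m$ of $K_{r-2}(t)$ in $G$, with $m\leq C_0(r,t,c,d)$, whose common neighborhoods cover $V(G)$. We claim that each $G[A_{K_i}]$ is $F$-free. Indeed, suppose $A_{K_i}$ contained a copy of $F$. Every vertex of that copy is adjacent to every vertex of $K_i$, and $F$ is disjoint from $K_i$ because no vertex lies in its own neighborhood. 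Together they would give a copy of $F\vee K_{r-2}(t)$ in $G$, hence a copy of $H$, which is a contradiction. Lemma~\ref{lem:forest-color} now gives $\chi(G[A_{K_i}])\leq\vtx(F)$.

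Finally, we color each vertex by choosing some index $i$ with $v\in A_{K_i}$ (say the least such $i$) and taking the pair consisting of $i$ and the color of $v$ in a proper $\vtx(F)$-coloring of $G[A_{K_i}]$. Two adjacent vertices with the same index $i$ both lie in $A_{K_i}$, so they receive different colors there. This is a proper coloring, so
\[
\chi(G)\leq\max\bigl\{n_0,\;C_0\,\vtx(F)\bigr\}.
\]
The parameters $F$ and $t$ depend only on $H$, so this bound depends only on $H$, $c$ and $d$.

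All the real difficulty sits in Lemma~\ref{lem:cover}, which is already proved. The remaining care is bookkeeping: making sure the bounds are independent of $n$, and checking disjointness in the join argument.
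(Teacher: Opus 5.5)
Your proposal is correct and follows the paper's proof essentially step for step. You reduce via Proposition~\ref{prop:structural} to $H\subseteq F\vee K_{r-2}(t)$, cite Theorem~\ref{thm:r3} for $r=3$, and for $r\geq 4$ use Lemma~\ref{lem:cover}; each $G[A_{K_i}]$ is then $F$-free and colored with $\vtx(F)$ colors from its own palette, with the small extra care that $A_{K_i}\cap V(K_i)=\emptyset$, which the paper leaves implicit.
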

	
	\begin{proof}
		By Proposition~\ref{prop:structural}, choose a forest $F$ and $t\geq1$ such that~\eqref{eq:ambient-join} holds. If $r=3$, the result is Theorem~\ref{thm:r3}.
		
		Assume $r\geq4$. Let $C_0$ and $n_0$ be given by Lemma~\ref{lem:cover}. For $n<n_0$, the trivial bound $\chi(G)\leq n_0$ suffices. For $n\geq n_0$, choose copies $K_1,\ldots,K_m$ of $K_{r-2}(t)$, with $m\leq C_0$, such that
		\[
		V(G)=\bigcup_{i=1}^{m}A_{K_i}.
		\]
		For every $i$, the graph $G[A_{K_i}]$ is $F$-free. Otherwise, a copy of $F$ in $A_{K_i}$ together with $K_i$ would give a copy of $F\vee K_{r-2}(t)$, and hence a copy of $H$. Lemma~\ref{lem:forest-color} therefore gives $\chi(G[A_{K_i}])\leq\vtx(F)$.
		Coloring the members of the cover with disjoint palettes yields
		\[
		\chi(G)
		\leq\sum_{i=1}^{m}\chi(G[A_{K_i}])
		\leq C_0\vtx(F).\qedhere
		\]
	\end{proof}
	
	Theorem~\ref{thm:upper-forest} proves
	\[
	\dvc(H)\leq\frac{r-3}{r-2}
	\]
	whenever $\M(H)$ contains a forest. If $\M(H)$ contains no forest, then the classification theorem of Allen et al.~\cite{AllenEtAl} gives
	\[
	\delta_\chi(H)=\frac{r-2}{r-1}.
	\]
	Since $\dvc(H)\leq\delta_\chi(H)$, this also proves the required upper bound in the second case of Theorem~\ref{thm:main}.
	
	\section{The lower bound}\label{sec:lower}
	
	We now show that the standard lower-bound constructions for ordinary chromatic thresholds have uniformly bounded VC-dimension.
	
	Erd\H{o}s~\cite{ErdosProbability} proved that for every pair of positive integers $g$ and $C$, there exists a graph $G'$ with girth greater than $g$ and chromatic number greater than $C$. Fix such a graph $G'$ and write $n_0:=|V(G')|$. For an integer $\geq1$, define $G(C,g,q)$ to be the join of $G'$ and $q-1$ independent sets, for which each one has order $n_0$. Thus $G(C,g,q)$ has $q$ equally sized parts, one of which induces $G'$ and the others are independent.
	
	The following elementary observation is the standard lower-bound construction used in the ordinary chromatic-threshold problem; see Allen, B\"ottcher, Griffiths, Kohayakawa, and Morris~\cite{AllenEtAl}.
	
	\begin{proposition}\label{prop:ABGKM-construction}
		Let $H$ be a graph with $\chi(H)=r\geq3$, and let $g>|V(H)|$.
		\begin{enumerate}
			\item[(i)] For every $C$, the graph $G(C,g,r-2)$ is $H$-free.
			\item[(ii)] If $\M(H)$ contains no forest, then for every $C$, the graph $G(C,g,r-1)$ is $H$-free.
		\end{enumerate}
	\end{proposition}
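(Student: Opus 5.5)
We argue by contradiction in both parts: we suppose $\phi:V(H)\to V(G(C,g,q))$ is an embedding of $H$ and show that it produces a proper coloring of $H$ that is impossible. Write the parts of $G(C,g,q)$ as $P_0,P_1,\dots,P_{q-1}$, where $P_0$ induces $G'$ and $P_1,\dots,P_{q-1}$ are independent sets. Any two vertices in different parts are adjacent.

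Set $W_i:=\phi^{-1}(P_i)$. Each $W_i$ with $i\ge1$ is independent in $H$, because $P_i$ is independent and $\phi$ maps edges of $H$ to edges. The graph $H[W_0]$ is isomorphic to a subgraph of $G'$ with at most $|V(H)|<g$ vertices. Every cycle of $H[W_0]$ would map to a cycle of $G'$ of length at most $|V(H)|<g$, which contradicts the girth assumption. Hence $H[W_0]$ is a forest, and in particular it is bipartite. Split $W_0$ into two independent sets $W_0^1,W_0^2$ using a bipartition of this forest. Then $W_0^1,W_0^2,W_1,\dots,W_{q-1}$ form a proper coloring of $H$ with at most $q+1$ colors; empty classes are simply discarded.

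For (i), take $q=r-2$. The coloring above uses at most $r-1$ colors, which contradicts $\chi(H)=r$. So $G(C,g,r-2)$ is $H$-free.

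For (ii), take $q=r-1$. Now the coloring uses at most $r$ colors, so it must use exactly $r$ nonempty classes, since $\chi(H)=r$. This means $W_0^1$, $W_0^2$ and every $W_i$ are nonempty, and we have a proper $r$-coloring of $H$. Delete the $r-2$ color classes $W_1,\dots,W_{r-1}$. What remains is $H[W_0^1\cup W_0^2]=H[W_0]$, which lies in $\M(H)$ by definition and is a forest. This contradicts the hypothesis that $\M(H)$ contains no forest.

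The only step needing care is the count of colors in (ii). If some class were empty, $H$ would be colored with fewer than $r$ colors, which is impossible, so the deletion of exactly $r-2$ classes really yields a member of $\M(H)$. The remaining steps, the girth argument and the independence of the parts $P_i$, are routine.
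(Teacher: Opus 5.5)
Your proof is correct and follows the same approach as the paper. The part of the copy of $H$ inside $G'$ is a forest by the girth condition, and two-coloring it plus one color per independent part gives an $(r-1)$-coloring when $q=r-2$, or an $r$-coloring whose deletion of $r-2$ classes leaves a forest when $q=r-1$. Your check that no class is empty in (ii) makes explicit a point the paper leaves implicit. One index slip: for $q=r-1$ the independent parts are $P_1,\dots,P_{r-2}$, so the deleted classes are $W_1,\dots,W_{r-2}$, not $W_1,\dots,W_{r-1}$.
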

	
	\begin{proof}
		Suppose on the contrary that $G(C,g,q)$ has a copy of $H$, where $q\in\{r-2,r-1\}$. Then the copy intersects $G(C,g,q)$ with the distinguished part inducing $G'$. This intersection is a forest; otherwise, it contains a cycle of length at most $|V(H)|<g$, contradicting the girth of $G'$.
		
		If $q=r-2$, then the vertices outside $G'$ lie in $r-3$ independent parts. Coloring the forest with two colors and each of these parts with one additional color would give a proper $(r-1)$-coloring of $H$, a contradiction. This proves~(i).
		
		If $q=r-1$, then the vertices outside $G'$ lie in $r-2$ independent parts. A copy of $H$ would therefore give a proper $r$-coloring for which deleting those $r-2$ color classes leaves a forest. Hence $\M(H)$ would contain a forest, proving~(ii).
	\end{proof}
	
	\begin{lemma}\label{lem:construction-vc}
		For every $C$, every $g\geq4$, and every $q\geq1$, the graph $G(C,g,q)$ has VC-dimension at most three.
	\end{lemma}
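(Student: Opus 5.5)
The plan is to use only the explicit form of the neighborhoods in $G(C,g,q)$, together with the fact that $G'$ has girth greater than $g\geq 4$, so $G'$ contains no $4$-cycle. Write $P_1$ for the part inducing $G'$ and $P_2,\ldots,P_q$ for the independent parts, and put $V:=V(G(C,g,q))$. For $v\in P_i$ with $i\geq 2$ we have $N(v)=V\setminus P_i$. For $v\in P_1$ we have $N(v)=N_{G'}(v)\cup(V\setminus P_1)$. We argue by contradiction: suppose a $4$-element set $Y\subseteq V$ is shattered by $\N(G(C,g,q))$. (For $q=1$ the graph is just $G'$, and the same argument applies with only the second type of neighborhood present.)

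\emph{Step 1: $Y$ lies inside a single part.} Since $Y$ is shattered, some vertex $v$ satisfies $N(v)\cap Y=\emptyset$. If $v\in P_i$ with $i\geq 2$, then $Y\cap(V\setminus P_i)=\emptyset$, that is, $Y\subseteq P_i$. If $v\in P_1$, then $Y\cap(V\setminus P_1)=\emptyset$, that is, $Y\subseteq P_1$.

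\emph{Step 2: $Y$ is not inside an independent part.} Suppose $Y\subseteq P_i$ with $i\geq 2$. Every neighborhood meets $Y$ either in $\emptyset$ (for vertices of $P_i$) or in all of $Y$ (for vertices of the other parts). So the trace on $Y$ has at most two members, whereas shattering a $4$-element set requires $2^4$ traces.

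\emph{Step 3: $Y$ is not inside $P_1$.} Suppose $Y\subseteq P_1$. Vertices outside $P_1$ have trace $Y$ on $Y$, so every proper subset of $Y$ must be realized as $N_{G'}(v)\cap Y$ for some $v\in P_1$. Pick $a,b\in Y$ and the two $3$-subsets $\{a,b,c\}$ and $\{a,b,c'\}$ of $Y$. These two traces are different, so they are realized by distinct vertices $v\neq v'$ of $G'$. Both $v$ and $v'$ are adjacent to $a$ and to $b$, so they are distinct from $a$ and $b$, and $a v b v' a$ is a $4$-cycle in $G'$. This contradicts $\mathrm{girth}(G')>g\geq4$. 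Hence no $4$-set is shattered, and $\VC(G(C,g,q))\leq 3$.

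There is no real obstacle here. The only step needing care is Step~1: it uses the empty trace to force $Y$ into a single part, after which both cases are immediate.
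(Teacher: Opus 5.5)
Your proof is correct and follows essentially the paper's route: the empty trace forces the shattered set into a single part, an independent part supports only two traces, and inside $G'$ two distinct vertices whose traces both contain $\{a,b\}$ give a $4$-cycle. The only cosmetic difference is that the paper first proves $\VC(G')\le 2$ (using the traces $\{x_1,x_2,x_3\}$ and $\{x_1,x_2\}$) and then restricts to a $3$-subset, whereas you obtain the $4$-cycle directly from two $3$-subsets of the $4$-set.
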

	
	\begin{proof}
		First observe that $\VC(G')\leq2$. Suppose on the contrary that three vertices $x_1,x_2,x_3$ are shattered by neighborhoods in $G'$. Let $y_{123}$ realize the trace $\{x_1,x_2,x_3\}$ and let $y_{12}$ realize the trace $\{x_1,x_2\}$. These two vertices are distinct, and $\{x_1,y_{123},x_2,y_{12}\}$
		forms a $4$-cycle of $G'$, contradicting the assumption that the girth of $G'$ is greater than $g\geq4$.
		
		Let $G:=G(C,g,q)$, and suppose that $X\subseteq V(G)$ is shattered by $\N(G)$. Since the empty trace on $X$ is realized, some vertex has no neighbor in $X$. By the join construction, this is possible only if all vertices of $X$ lie in a single part.
		
		If $X$ lies in one of the independent parts, then a vertex in this part realizes the empty trace and a vertex outside the part realizes the full trace; no other trace is possible. Hence $|X|\leq1$. Assume that $X$ lies in the part inducing $G'$. If $|X|\geq4$, then every proper subset of $X$ must be realized by a vertex of $G'$, because every vertex outside this part is adjacent to all of $X$. Fix any three-element subset $Y\subseteq X$. For every $Z\subseteq Y$, the trace $Z$ is a proper subset of $X$, and therefore is realized by a vertex of $G'$. Thus $Y$ is shattered in $G'$, contradicting $\VC(G')\leq2$. Consequently $|X|\leq3$.
	\end{proof}
	
	We can now finish the proof of the main theorem.
	
	\begin{proof}[Proof of Theorem~\ref{thm:main}]
		The upper bounds were established at the end of Section~\ref{sec:upper}.
		
		For the first lower bound, let $g>|V(H)|$ and consider $G(C,g,r-2)$. By Proposition~\ref{prop:ABGKM-construction}, this graph is $H$-free for every $C$, and its chromatic number tends to infinity with $C$. Lemma~\ref{lem:construction-vc} gives $\VC(G(C,g,r-2))\leq3$. If $r\geq4$, every vertex has at least all vertices outside its own part as neighbors, so
		\[
		\delta(G(C,g,r-2))
		\geq \frac{r-3}{r-2}|V(G(C,g,r-2))|.
		\]
		For $r=3$, the desired lower bound is zero. Hence, for every $r\geq3$,
		\[
		\dvc(H)\geq\frac{r-3}{r-2}.
		\]
		Together with Theorem~\ref{thm:upper-forest}, this proves the first case of~\eqref{eq:classification}.
		
		Now suppose that $\M(H)$ contains no forest. Proposition~\ref{prop:ABGKM-construction} shows that $G(C,g,r-1)$ is $H$-free for every $C$. It has unbounded chromatic number, VC-dimension at most three, and
		\[
		\delta(G(C,g,r-1))
		\geq\frac{r-2}{r-1}|V(G(C,g,r-1))|.
		\]
		Therefore,
		\[
		\dvc(H)\geq\frac{r-2}{r-1}.
		\]
		The reverse inequality was proved in Section~\ref{sec:upper}, completing the proof.
	\end{proof}
	
	\medskip
	\noindent\textbf{Acknowledgements.} 
	Upon finalizing this manuscript, we became aware that Lior Gishboliner, Xinqi Huang, and Hong Liu have independently obtained results identical to the main conclusions of our work. We are grateful for their notification.

\end{document}